\providecommand{\U}[1]{\protect\rule{.1in}{.1in}}
\newtheorem{theorem}{Theorem}[section]
\theoremstyle{plain}
\newtheorem{corollary}[theorem]{Corollary}
\newtheorem{lemma}[theorem]{Lemma}
\theoremstyle{definition}
\newtheorem{definition}[theorem]{Definition}
\newtheorem{remark}[theorem]{Remark}
\newtheorem{example}[theorem]{Example}
\numberwithin{equation}{section}
\renewcommand*\env@matrix[1][*\c@MaxMatrixCols c]{%
	\hskip -\arraycolsep
	\let\@ifnextchar\new@ifnextchar
	\array{#1}}
\begin{document}
\title{The Jones Polynomial of a Connect Sum is Multiplicative: A New Approach Via Trip Matrices}

\author{Molly A. Moran}
\address{Department of Mathematics, The Colorado College, Colorado Springs, Colorado 80903}
\email{mmoran@coloradocollege.edu}
\author{Emerson Worrell}
\address{Department of Mathematics, Oregon State University, Corvallis, OR 97331}
\email{worrelle@oregonstate.edu}

\date{}
\keywords{Knot Theory, Jones Polynomial, Connect Sums}

\begin{abstract}
	\noindent We utilize the trip matrix method of calculating the Jones Polynomial to give an alternative proof that the Jones Polynomial is multiplicative under connect sums. 
	
	\noindent\textbf{AMS classification numbers}. 57K10, 57K14
\end{abstract}
\maketitle

\section{Introduction}
\label{sec: introduction and motivation}

The Jones polynomial of a knot $K$, denoted $V_K$, was introduced in 1984 by V. Jones \cite{Jones1} as a new polynomial knot invariant. Much is known about this invariant, but, like many areas in knot theory, there are still many open questions. There are also calls for finding more elementary proofs of known results (see \cite{Adams}).  
 
 The trip matrix method, introduced by Louis Zulli \cite{Zulli}, provides an alternative method of computing the Jones Polynomial using the state method \cite{Kauffman2} and basic linear algebra. Given a knot diagram with $n$ crossings, the trip matrix is an $n\times n$ matrix over $\mathbb{Z}_2$ that encodes information about the crossings that one encounters while traveling along the knot. We use the trip matrix to provide an alternative, elementary proof that the Jones Polynomial is multiplicative over connect sums \cite{Jones2}. 

 In Section 2, we provide the necessary background information on trip matrices necessary for our work. In Section 3, we give results on the structure of trip matrices of composite knots which leads to a new proof of the multiplicative nature of the Jones Polynomial. 

\section{The Trip Matrix}
\label{sec: background information}

We begin with the construction of the trip matrix as described in \cite{Zulli}. Let $K$ be a knot diagram with $n$ crossings. Adorn the knot diagram by first labeling the $n$ crossings $1$ through $n$ in any order. At each over-crossing, choose a direction along the knot at random and draw an arrow pointing in that direction. At each under-crossing, place an arrow that is oriented to point counterclockwise from its corresponding over-crossing arrow. From this, we construct a matrix corresponding to this knot diagram, called the trip matrix, which we denote by $T_K$.
	
Each entry in the $i^{th}$ row and $j^{th}$ of $T_K$ will be either $0$ or $1$ depending on the following rules:  
	\begin{itemize}
		\item If $i=j$, follow the over-crossing arrow at crossing $i$ along the knot diagram in the direction indicated by the arrow until you return to crossing $i$ at the under-crossing arrow. If the under-crossing arrow leads you on, that is, if it points in the direction you are currently traveling, then $T_{ii}=0$. If the under-crossing arrow pushes you back, that is, if the arrow points to where you are traveling from, then $T_{ii}=1$. 
	
		\item If $i\neq j$, follow the over-crossing arrow at crossing $j$. Count the number of times modulo $2$ that you pass through crossing $i$ before reaching crossing $j$ again. This number is the $T_{ij}$ entry. 
	\end{itemize} 
    
We note that the trip matrix is inherently symmetric by construction. Therefore, we may begin at either crossing when computing $T_{ij}$, and this also reduces the amount of computation needed. 

\begin{example}
	 Consider an adorned diagram of the figure eight knot $K$ in Figure \ref{fig: Figure 8}. 
%	 Note that the  choice of direction for the over-crossing arrows is done at random and need not respect any orientation for the whole knot. With the over-crossing arrows fixed, the under-crossing arrows point counterclockwise from their over-crossing counterparts. 
	
	\begin{figure}[h]
		\centering
		\includegraphics[width = .4\textwidth]{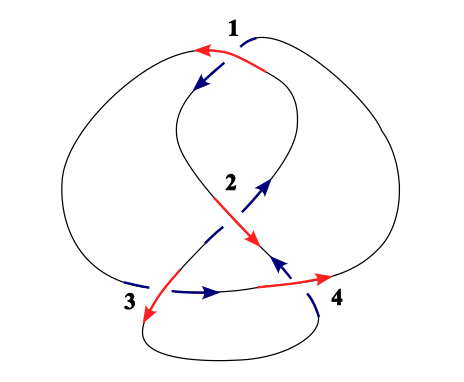}
	\caption{The Figure 8 Knot.}
    \label{fig: Figure 8}
	\end{figure}
    
As the diagram has four crossings, $T_K$ will be a $4 \times 4$ matrix over $\mathbb{Z}_2$. We compute the diagonal entries first. Starting at crossing $1$, we follow the over-crossing arrow and when we return to the under-crossing, the arrow leads us on, so $T_{11}=0$. Crossing $2$ behaves similarly and hence $T_{22}=0$. For crossings $3$ and $4$, the under-crossing arrows leads back the way we came, so $T_{33}=T_{44}=1$.
	
For the non-diagonal entries, we first observe that if we begin at the over-crossing labeled $1$ and trace the knot, we pass through crossing $3$, crossing $4$, and then arrive back at crossing $1$. Hence, $T_{13}=T_{31}=1$, $T_{14}=T_{41}=1$, and $T_{12}=T_{21}=0$. Repeating this process at over-crossings $2,3$ and $4$, we obtain the following trip matrix:
	
	\[\begin{bmatrix}
	0 & 0 & 1 & 1 \\
	0 & 0 & 1 & 1 \\
	1 & 1 & 1 & 0 \\
	1 & 1 & 0 & 1 \\
	\end{bmatrix}\]
\end{example}

 A natural question to ask when first working with the trip matrix is what happens to the trip matrix if we adorn the knot differently. Firstly, the choice of orientation on the over-crossing will not affect the trip matrix \cite{Zulli}. However, the numbering of the crossings can generate different trip matrices. Consider, for example, the figure eight knot with two different labelings shown in Figure \ref{fig: different labellings}. 

\begin{figure}[h]
	\centering
	\includegraphics[width = .75\textwidth]{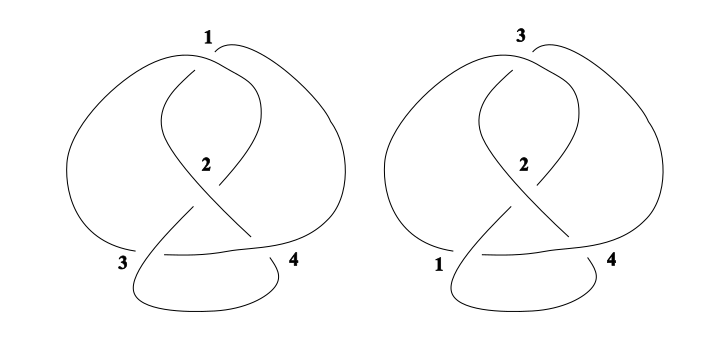}
	\caption{Different labelings for the figure 8 knot.}
	\label{fig: different labellings}
\end{figure}

Regardless of choice of orientation of arrows, the adorned knots will have the following respective trip matrices: 
\[\begin{bmatrix}
0 & 0 & 1 & 1 \\
0 & 0 & 1 & 1 \\
1 & 1 & 1 & 0 \\
1 & 1 & 0 & 1 
\end{bmatrix}
\qquad 
\begin{bmatrix}
1 & 1 & 1 & 0 \\
1 & 0 & 0 & 1 \\
1 & 0 & 0 & 1 \\
0 & 1 & 1 & 1 
\end{bmatrix}\]

Clearly, these two matrices are different. However, they are related as follows: in the first matrix, swap row $1$ and $3$, followed by a swap of columns $1$ and $3$. 
\[ \begin{bmatrix}
0 & 0 & 1 & 1 \\
0 & 0 & 1 & 1 \\
1 & 1 & 1 & 0 \\
1 & 1 & 0 & 1 
\end{bmatrix} \rightarrow 
\begin{bmatrix}
1 & 1 & 1 & 0 \\
0 & 0 & 1 & 1 \\
0 & 0 & 1 & 1 \\
1 & 1 & 0 & 1 
\end{bmatrix} \rightarrow
\begin{bmatrix}
1 & 1 & 1 & 0 \\
1 & 0 & 0 & 1 \\
1 & 0 & 0 & 1 \\
0 & 1 & 1 & 1 
\end{bmatrix}
\]

Notice that this operation transforms the first matrix into the second. Choosing to swap rows and columns $1$ and $3$ was not random: in Figure \ref{fig: different labellings}, crossings $1$ and $3$ are swapped, and since the rows and columns in the trip matrix correspond to the labels on crossings, it is natural to swap these in the trip matrix. We now give this operation on a matrix a name and prove some basic properties.

\begin{definition}[Row-Column Swap]
	\label{def: row/column swapping}
	Given a square matrix $A$, the \textit{row-column swap operation}, denoted by $\Delta(i,j)$, is defined by first swapping row $i$ with row $j$ in the matrix $A$, followed by swapping column $i$ with column $j$.
\end{definition}
We first observe that the row-column swap operation is equivalent to a column-row swap:

\begin{lemma} \label{lemma: delta commutative} The $\Delta$ operation gives rise to the same matrix independent of whether the rows or columns are swapped first. \end{lemma}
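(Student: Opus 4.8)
The plan is to reduce both orders of operation to a single matrix product and then invoke associativity of matrix multiplication. Let $P_{(i,j)}$ denote the permutation matrix obtained from the identity matrix by swapping its $i$th and $j$th rows. This matrix is symmetric and satisfies $P_{(i,j)}^2 = I$, though only its symmetry will be needed.

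The key observation is that left multiplication by $P_{(i,j)}$ performs the row swap, while right multiplication by $P_{(i,j)}$ performs the column swap. That is, for any square matrix $A$ of the appropriate size, $P_{(i,j)}A$ is the matrix obtained from $A$ by interchanging rows $i$ and $j$, and $AP_{(i,j)}$ is the matrix obtained by interchanging columns $i$ and $j$. I would verify these two facts by a short direct computation on entries, using the symmetry of $P_{(i,j)}$ to confirm that the single matrix $P_{(i,j)}$ effects both swaps. With this in hand, performing the row swap first and then the column swap produces $\bigl(P_{(i,j)}A\bigr)P_{(i,j)}$, whereas performing the column swap first and then the row swap produces $P_{(i,j)}\bigl(AP_{(i,j)}\bigr)$. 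By associativity of matrix multiplication both equal $P_{(i,j)}AP_{(i,j)}$, which is precisely the claim.

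The only subtlety, and it is minor, will be confirming the correspondence between the elementary swap operations and multiplication by $P_{(i,j)}$ on the correct side; once this is pinned down, associativity does all the work and no case analysis on the indices is needed. As an alternative that avoids permutation matrices entirely, one could argue at the level of entries: writing $\sigma$ for the transposition that exchanges $i$ and $j$ and fixes everything else, a direct check shows that both orders of operation transform $A$ into the matrix whose $(a,b)$ entry equals $A_{\sigma(a),\sigma(b)}$, so the two results coincide. Either route makes the lemma essentially a formal consequence of how swaps compose, so I expect the proof to be short and the main effort to lie only in stating the entrywise bookkeeping cleanly.
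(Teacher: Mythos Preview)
Your argument is correct and considerably cleaner than the paper's. The paper proves the lemma by brute-force entry tracking: it lists the six types of affected entries $T_{ii}$, $T_{jj}$, $T_{ij}$, $T_{ji}$, $T_{ik}$, $T_{ki}$ (with $k\notin\{i,j\}$) and follows each through both orders of operation, checking by hand that the destinations agree. Your approach replaces this case analysis with a single algebraic identity: once row and column swaps are identified with left and right multiplication by the transposition matrix $P_{(i,j)}$, the result is immediate from associativity, $(P_{(i,j)}A)P_{(i,j)} = P_{(i,j)}(AP_{(i,j)})$. The permutation-matrix formulation also makes it transparent that the $\Delta$ operation is a conjugation $A\mapsto P^{-1}AP$, which in turn explains at a glance why $\Delta$-equivalent matrices share rank, nullity, and diagonal multiset---facts the paper uses later. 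Your alternative entrywise description via $A_{\sigma(a),\sigma(b)}$ is essentially a compressed version of the paper's bookkeeping and is likewise valid.
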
 
\begin{proof}
	Let $T$ be an $n\times n$ matrix and $i,j,k \in \{1,2,3,...n\}$ be distinct. Consider $\Delta(i,j)$. We begin by swapping rows first, then columns, observing where each entry in $T$ is sent; here, the first arrow denotes the swapping of rows $i$ and $j$ and the second columns $i$ and $j$: 
\[
\begin{array}{c|c|c}
 T_{ii} \rightarrow T_{ji} \rightarrow  T_{jj} & T_{ji} \rightarrow T_{ii} \rightarrow  T_{ij} &  T_{ik} \rightarrow T_{jk} \rightarrow  T_{jk}      \\
 & & \\
 T_{jj} \rightarrow T_{ij} \rightarrow  T_{ii} & T_{ij} \rightarrow T_{jj} \rightarrow  T_{ji} & T_{ki} \rightarrow T_{ki} \rightarrow  T_{kj}
     
\end{array}\]
	
We now repeat the process by first swapping columns, followed by rows:

\[
\begin{array}{c|c|c}
 T_{ii} \rightarrow T_{ij} \rightarrow  T_{jj} & T_{ij} \rightarrow T_{ii} \rightarrow  T_{ji} & T_{ik} \rightarrow T_{ik} \rightarrow  T_{jk} \\
 & & \\
 T_{jj} \rightarrow T_{ji} \rightarrow  T_{ii} & T_{ji} \rightarrow T_{jj} \rightarrow  T_{ij} & T_{ki} \rightarrow T_{kj} \rightarrow  T_{kj}
\end{array}
\]

The six entries tracked above are the only entries affected by $\Delta(i,j)$, since only entries in rows and columns $i$ and $j$ can be affected. We find that each entry is sent to the same place regardless of the order of operations, and thus $\Delta$ is independent of choice of swapping rows or columns first.
\end{proof}

\begin{theorem}
	\label{thm: delta = relabel}
	Given a diagram for a knot $K$ with trip matrix $T_K$, the $\Delta$ operation applied to $T_K$ is equivalent to the corresponding relabeling of the crossings in the knot diagram.  
\end{theorem}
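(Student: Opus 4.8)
The plan is to exploit the fact that every entry of the trip matrix is determined entirely by the \emph{physical} crossings it refers to, together with the adornment (the over/under data and the arrows), neither of which is altered by renaming crossings. Relabeling only changes which integer is attached to which physical crossing; it never changes the diagram itself, nor the traversal described in the definition of $T_K$. Thus I would begin by fixing the relabeling under consideration to be the swap of the labels $i$ and $j$, writing $\sigma=(i\,j)$ for the corresponding transposition, and letting $K'$ denote the identical diagram carrying the swapped labels. The crossing wearing the label $a$ in $K'$ is precisely the crossing that wore the label $\sigma(a)$ in $K$.

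With this set up, the first key step is to read off, directly from the definition of the trip matrix, that $(T_{K'})_{ab}=(T_K)_{\sigma(a)\,\sigma(b)}$ for all indices $a,b$. For the off-diagonal case this is immediate: the count of passes through the crossing labeled $a$ while traveling from the over-crossing labeled $b$ in $K'$ is literally the count of passes through the physical crossing $\sigma(a)$ while traveling from the physical over-crossing $\sigma(b)$ in $K$, since these are the same physical crossings and the same physical traversal. The diagonal case is handled the same way: the self-referential ``leads you on / pushes you back'' test at the crossing labeled $a$ in $K'$ is the test at the physical crossing $\sigma(a)$ in $K$. This step is where the only real content lies, and the obstacle to make rigorous is precisely the assertion that the traversal count is an invariant of the physical diagram and adornment, independent of the names assigned to crossings; I would stress that the arrows and over/under structure are intrinsic to the drawn diagram and untouched by renaming, so the numbers produced by the definition cannot change.

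The second step is the purely linear-algebraic identity $(\Delta(i,j)T_K)_{ab}=(T_K)_{\sigma(a)\,\sigma(b)}$. One clean way is to observe that $\Delta(i,j)$ is conjugation by the permutation matrix $P$ of the transposition $\sigma$: swapping rows $i,j$ is left multiplication by $P$ and swapping columns $i,j$ is right multiplication by $P$, so $\Delta(i,j)T_K=PT_KP$, and since $P=P^{-1}$ one computes $(PT_KP)_{ab}=(T_K)_{\sigma(a)\,\sigma(b)}$. Alternatively, since only the six entries lying in rows and columns $i$ and $j$ are disturbed, this identity can be verified by exactly the entry-by-entry bookkeeping already carried out in \lemref{lemma: delta commutative}. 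Either route is routine.

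Finally, comparing the identities from the two previous steps gives $(T_{K'})_{ab}=(\Delta(i,j)T_K)_{ab}$ for every $a,b$, hence $T_{K'}=\Delta(i,j)T_K$, which is exactly the claim that applying $\Delta(i,j)$ to the trip matrix realizes the relabeling of the diagram. I expect the main obstacle to be not the algebra but the clean articulation of the physical-invariance observation in the first step; once that is stated carefully, the remainder is mechanical.
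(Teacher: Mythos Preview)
Your argument is correct and follows essentially the same approach as the paper: both observe that the entries of the trip matrix depend only on the physical crossings and adornment, so relabeling merely reindexes the entries via the transposition $\sigma=(i\,j)$, which is exactly what $\Delta(i,j)$ accomplishes. Your version is somewhat more formal---expressing the reindexing as $(T_{K'})_{ab}=(T_K)_{\sigma(a)\sigma(b)}$ and identifying $\Delta(i,j)$ with conjugation by the permutation matrix of $\sigma$---but the underlying idea is identical to the paper's.
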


\begin{proof}
	Let $T_K$ be the trip matrix for a knot $K$ under some labeling, and consider performing $\Delta(i,j)$ on $T_K$ to get a new matrix $T'_K$. In row $i$ and column $i$ of $T_K$, the relationship between crossing $i$ and the other crossings in the knot is recorded. After row-column swapping, these exact same relationships are cataloged into row and column $j$ as described in Lemma \ref{lemma: delta commutative}. Similarly the information pertaining to crossing $j$ is now located where the information regarding crossing $i$ previously was. This exact same result would be achieved if we were to instead swap the labels in $K$ of crossings $i$ and $j$; the same relationships would exist, but they would correspond to the rows and columns we swapped. Therefore, a swapping of two labels in the knot diagram is equivalent to performing the $\Delta$ operation on the corresponding rows and columns in the trip matrix.
\end{proof}

This relationship gives rise to a natural notion of equivalency between trip matrices, which we now define.

\begin{definition}[$\Delta$-equivalent]
	\label{delta equivalent}
	We say two matrices are \textit{$\Delta$-equivalent} if there exists some sequence of $\Delta$ operations that transforms one matrix into the other. If there is no such sequence of moves, then the two matrices are said to be \textit{$\Delta$-distinct}.
\end{definition}

For a given knot diagram $K$ with $n$-crossings, there are up to $n!$ possible trip matrices, corresponding to the $n!$ different choices of crossing labels. In some cases, the relabeling may not cause any changes to the trip matrix (see the trip matrix of the trefoil knot). However, if a relabeling does produce different trip matrices, Theorem \ref{thm: delta = relabel} guarantees these matrices are $\Delta$-equivalent.

\begin{remark}\label{rk:reidemeister} It is important to note that Theorem \ref{thm: delta = relabel} does not say that all trip matrices for a knot are $\Delta$-equivalent. A given knot has infinitely many different projections with different numbers of crossings, which lead to different trip matrices that cannot be $\Delta$-equivalent. For example, consider the figure eight knot from Figure \ref{fig: Figure 8}. In this projection, there are four crossings and hence, the trip matrix is $4\times 4$. If a simple twist is added to the knot, then the new projection has five crossings, which corresponds to a $5\times 5$ trip matrix. Clearly, this new trip matrix cannot be obtained from a simple row-column swap of a $4\times 4$ trip matrix. Thus, Theorem \ref{thm: delta = relabel} only applies to a fixed diagram for a knot $K$.  \end{remark}

\section{The Jones Polynomial of Connect Sums}
Given a trip matrix $T_K$, we can compute the Jones polynomial $V_K$ using properties of this matrix, as well as associated matrices that have been altered along the diagonal. To understand these modifications, we first define the state for a diagram. 

\begin{definition}[State] A \emph{state} $S$ for a diagram of a knot $K$ is a function that assigns to each crossing an $A$ or $B$. Let $\mathcal{S}(K)$ be the set of all possible states. 
\end{definition} 

We first observe that if a knot has $n$ crossings, there are $2^n$ possible states. We wish to assign a matrix to each state. The trip matrix $T_K$ corresponds to the state where each crossing has been assigned an $A$. If $S \in \mathcal{S}(K)$ is a state that is obtained from the all $A$ state by changing the crossings $i_1,i_2,...i_k$ to a $B$, we toggle (change from $0$ to $1$ or vice-versa) the entries $T_{i_1i_1},T_{i_2i_2},...,T_{i_ki_k}$ along the diagonal of $T_K$. We denote the resulting modified matrix by $T_{K_S}$. 

We use these toggled matrices to compute the Jones Polynomial as follows: 

\begin{theorem}\label{Zulli}
	\cite{Zulli}
	The \textit{Jones Polynomial} for a knot $K$ is given by the following formula:
\[V_K = (-t^{\frac{3}{4}})^{w(K)}\sum_{S \in \mathcal{S}(K)} t^{-\frac{1}{4}^{A(S)}} t^{\frac{1}{4}^{B(S)}}(-t^{-\frac{1}{2}}-t^{\frac{1}{2}})^{nul(T_{K_S})}\]
	where
	\begin{enumerate}
		\item 
		$w(K)$ is the number of ones on the diagonal minus the number of zeros on the diagonal of $T_K$,
		\item$A(S)$ and $B(S)$ are the number of $A$'s and $B$'s in state $S$, respectively, and 
		\item $nul(T_{K_S})$ is the dimension of the null space of $T_{K_S}$. 
	\end{enumerate}
\end{theorem}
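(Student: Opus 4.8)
The displayed formula is due to Zulli \cite{Zulli}; I sketch the route I would take. The plan is to derive it from the Kauffman bracket state sum together with the substitution $A=t^{-1/4}$, reducing everything to two facts: that the diagonal of the trip matrix records the writhe, and that the nullity of each toggled matrix records the number of loops of the corresponding smoothing. Before starting, I would note that the right-hand side is genuinely an invariant of the \emph{diagram}, not of the labeling: by \thmref{thm: delta = relabel} a relabeling acts on $T_K$ and on every $T_{K_S}$ by a sequence of $\Delta$-operations, i.e. by simultaneous conjugation by a permutation matrix, which preserves $nul(T_{K_S})$ and only permutes the diagonal, hence preserves $(\#1\text{'s})-(\#0\text{'s})$. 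So I am free to choose any convenient labeling.

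Recall the Kauffman bracket $\langle K\rangle=\sum_{S\in\mathcal{S}(K)}A^{A(S)-B(S)}\,d^{\,|S|-1}$ with $d=-A^{2}-A^{-2}$, where $|S|$ is the number of loops obtained by giving each crossing its $A$- or $B$-smoothing according to $S$, and $V_K=(-A^{3})^{-w(K)}\langle K\rangle$. Substituting $A=t^{-1/4}$ sends $A^{A(S)-B(S)}$ to $t^{-A(S)/4}t^{B(S)/4}$, sends $d$ to $-t^{-1/2}-t^{1/2}$, and sends $(-A^{3})^{-w(K)}$ to $(-t^{3/4})^{w(K)}$. Thus the formula follows once I establish the two claims below.

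\emph{Claim (A): $(\#1\text{'s})-(\#0\text{'s})$ on the diagonal of $T_K$ equals $w(K)$.} This I would check crossing by crossing: the adornment rule, placing the under-crossing arrow counterclockwise from the over-crossing arrow, makes $T_{ii}$ depend only on the local sign of crossing $i$, so comparing the two oriented crossing types shows $T_{ii}=1$ for exactly one sign and $0$ for the other, whence the diagonal statistic is $\sum_i\mathrm{sign}(i)=w(K)$. \emph{Claim (B): $nul(T_{K_S})=|S|-1$ for every state $S$.} This is the heart of the matter, and I would prove it topologically. Given $S$, form the state surface $\Sigma_S$ by capping each of the $|S|$ state circles with a disk and attaching one (possibly half-twisted) band at each of the $n$ crossings. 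Since the knot diagram is connected, $\Sigma_S$ is connected, so from $|S|$ $0$-handles and $n$ $1$-handles we get $\chi(\Sigma_S)=|S|-n$ and therefore $b_1(\Sigma_S;\mathbb{Z}_2)=1-\chi(\Sigma_S)=n-|S|+1$. The matrix $T_{K_S}$ is precisely the mod-$2$ interlacement-with-framing matrix of this band configuration: the off-diagonal entry $T_{ij}$ is the mod-$2$ interlacement of bands $i$ and $j$ along the knot, and the diagonal entry records the twist of band $i$. I would then identify $T_{K_S}$ with the mod-$2$ self-intersection form carried by the band cores and show $\mathrm{rank}_{\mathbb{Z}_2}(T_{K_S})=b_1(\Sigma_S)$, which gives $nul(T_{K_S})=n-\mathrm{rank}_{\mathbb{Z}_2}(T_{K_S})=n-b_1(\Sigma_S)=|S|-1$.

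The main obstacle is exactly this last identification, $\mathrm{rank}_{\mathbb{Z}_2}(T_{K_S})=b_1(\Sigma_S)$. It cannot come from linear algebra alone: a single smoothing change always alters $|S|$ by $\pm1$, yet it toggles a single diagonal entry of $T_{K_S}$, and over $\mathbb{Z}_2$ toggling a diagonal entry need not change the rank (for instance $\bigl[\begin{smallmatrix}0&1\\1&0\end{smallmatrix}\bigr]$ and $\bigl[\begin{smallmatrix}1&1\\1&0\end{smallmatrix}\bigr]$ are both nonsingular). Hence planarity and realizability of the diagram must enter, and they do so through the existence and connectivity of $\Sigma_S$. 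I would therefore establish the rank identity by exhibiting the radical of the trip-matrix form as the span of the $|S|-1$ independent relations coming from the disks (equivalently, by tracking how one saddle move changes both $\chi(\Sigma_S)$ and the corank of the form in lockstep, using the surface to license the rank change that the matrix alone does not force). With Claims (A) and (B) proved, the substitution above yields the stated formula.
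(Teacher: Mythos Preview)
The paper does not prove this theorem. It is stated as background, attributed to Zulli via the citation \cite{Zulli}, and then used as a black box in the proof of \thmref{thm: Big Boy}. There is therefore no ``paper's own proof'' to compare your attempt against.

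That said, your sketch is a faithful outline of how the result is actually established in \cite{Zulli}: one reduces to the Kauffman state sum with $A=t^{-1/4}$, checks that the diagonal of $T_K$ records crossing signs so that their alternating count is $w(K)$, and then proves the key identity $\mathrm{nul}(T_{K_S})=|S|-1$. Your Claim~(A) is exactly the local check Zulli makes. For Claim~(B) you propose a state-surface argument via $\chi(\Sigma_S)$ and the mod-$2$ intersection form; Zulli's original route is instead a direct induction on smoothings, tracking how a single diagonal toggle changes both the loop count and the corank by exactly one, using the interlacement structure coming from a planar Gauss code to force the rank change. Your honest flagging of the obstacle---that toggling a diagonal entry need not change rank over $\mathbb{Z}_2$ in general, so planarity must enter---is precisely the point, and your proposed resolution via the radical generated by the disk relations is a legitimate alternative to Zulli's induction, though you have left the actual verification as a plan rather than a proof.
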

In \cite{Jones2}, Jones proved that the Jones polynomial of a connect sum is the product of the Jones polynomials of the components. We provide an alternate elementary proof of this result using trip matrices. To begin, we examine the structure of the trip matrix and exploit this structure to calculate the quantities in Theorem \ref{Zulli}. 

\subsection{Trip Matrix for Connect Sums}

We begin with an example of the trip matrix for a composite knot constructed as the connect sum of 3 prime knots. Consider the knot $K$ with the adorned diagram in Figure 3. Observe that the three knot components of the  connect sum are clearly defined and the crossings are labeled in a way that respects the components.
	\begin{figure}[h]\label{fig:composite}
	\centering
	\includegraphics[width = \textwidth]{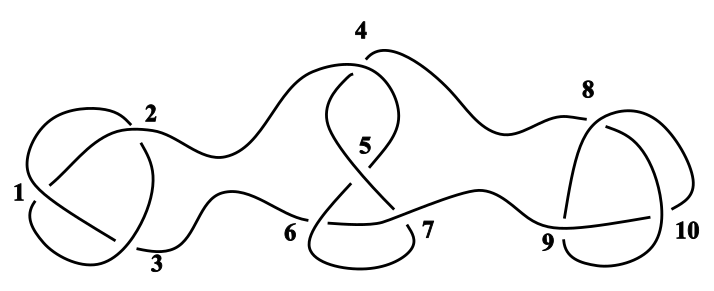}
	\caption{A composite knot with three factors}

\end{figure}
\begin{center}

\end{center}

Following the procedure presented in Section \ref{sec: background information}, the trip matrix $T_K$ with this adornment is
\[ T_K = \begin{bmatrix}[ccc|cccc|ccc]
1 & 1 & 1 & 0 & 0 & 0 & 0 & 0 & 0 & 0 \\
1 & 1 & 1 & 0 & 0 & 0 & 0 & 0 & 0 & 0 \\
1 & 1 & 1 & 0 & 0 & 0 & 0 & 0 & 0 & 0 \\
\hline
0 & 0 & 0 & 0 & 0 & 1 & 1 & 0 & 0 & 0 \\
0 & 0 & 0 & 0 & 0 & 1 & 1 & 0 & 0 & 0 \\
0 & 0 & 0 & 1 & 1 & 1 & 0 & 0 & 0 & 0 \\
0 & 0 & 0 & 1 & 1 & 0 & 1 & 0 & 0 & 0 \\
\hline
0 & 0 & 0 & 0 & 0 & 0 & 0 & 1 & 1 & 1 \\
0 & 0 & 0 & 0 & 0 & 0 & 0 & 1 & 1 & 1 \\
0 & 0 & 0 & 0 & 0 & 0 & 0 & 1 & 1 & 1 

\end{bmatrix}\]

We quickly observe that this matrix has a block structure. In particular, $T_K$ is a  \emph{block diagonal matrix}, an $n \times n$ matrix that is formed by placing smaller square matrices $A_1, A_2,...A_m$ along the diagonal with zeros in all remaining entries; in the above matrix, we have introduced lines to clearly denote the distinct blocks in the matrix. 
%do we need to include this matrix?
%\[ \begin{bmatrix}
	%A_{1} &  &  &0 \\ 
	%& A_{2} &  & \\ 
	%&  &  \ddots & \\ 
	%0&  &   & A_{m} 
%\end{bmatrix}\]

Every knot that arises from a connect sum of non-trivial prime knots has a knot diagram where each prime component is isolated as in Figure 3. If the crossings in this ``nice" diagram are numbered in groups respecting the components of the connect sum, the trip matrix will be block diagonal as we now prove. 

\begin{theorem}
	For every composite knot $K$, there is a trip matrix $T_K$ that is block diagonal where the blocks correspond to the trip matrices of the components of $K$. 
	\label{thm: composite implies block}
\end{theorem}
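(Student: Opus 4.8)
The plan is to produce, for $K = K_1 \# K_2 \# \cdots \# K_m$, the specific ``isolating'' diagram suggested by Figure 3 and then to read its trip matrix directly off the combinatorics of the traversal. First I would build the diagram $D$ by presenting each prime factor $K_\ell$ as a knotted arc (a $1$--$1$ tangle), obtained by cutting $K_\ell$ open along an arc that contains no crossing, and then concatenating the tangles $K_1, K_2, \ldots, K_m$ left to right and closing the result into a single loop. The connecting arcs introduce no new crossings, so the crossings of $D$ are exactly the crossings of the factors; I label them in blocks, giving $K_1$ the labels $1,\ldots,n_1$, giving $K_2$ the labels $n_1+1,\ldots,n_1+n_2$, and so on. Since by \cite{Zulli} the choice of over-crossing arrows is immaterial, I may adorn and label each factor exactly as it was adorned and labeled to produce $T_{K_\ell}$.

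The single structural fact that drives everything is this: because each tangle is entered at one end and traversed completely before the strand moves on to the next tangle, \emph{the two visits to every crossing of $K_\ell$ occur inside one contiguous arc of the traversal}, and these arcs appear in the cyclic order $K_1, K_2, \ldots, K_m$ around the knot. Equivalently, in the Gauss code the factors occupy disjoint, non-interleaved arcs. I would isolate and prove this contiguity claim first, since the three entry computations below are all immediate consequences of it.

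With contiguity in hand I would compute the three types of entries of $T_D$. For a cross-block off-diagonal entry $T_{ij}$ with $i$ in block $K_\ell$ and $j$ in block $K_{\ell'}$, $\ell\neq\ell'$, both visits to $j$ lie in the arc of $K_{\ell'}$, so the entire arc of $K_\ell$ (which contains both visits to $i$) lies in one of the two sub-arcs cut out by the two visits to $j$; hence, traveling from the over-crossing of $j$ to its under-crossing one passes through $i$ either zero times or exactly twice, so $T_{ij}\equiv 0 \pmod 2$. For a same-block off-diagonal entry with $i,j$ both in $K_\ell$, the value depends only on the cyclic order in which the crossings of $K_\ell$ are visited, and this order is identical in $D$ and in the standalone diagram of $K_\ell$; any portion of the trip that leaves block $K_\ell$ passes through no crossing of $K_\ell$ and so contributes nothing to the count of passes through $i$. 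Thus $T_{ij}=(T_{K_\ell})_{ij}$. Finally, for a diagonal entry $T_{ii}$ with $i\in K_\ell$, the under-crossing arrow at $i$ is fixed locally, and since the connect sum preserves the direction of travel along $K_\ell$ the strand reaches the under-crossing of $i$ from the same side as in the standalone diagram; thus ``lead on'' versus ``push back'' is decided exactly as in $K_\ell$, giving $T_{ii}=(T_{K_\ell})_{ii}$.

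Assembling these, every entry of $T_D$ straddling two distinct blocks is $0$ while the entries within the $\ell$-th block reproduce $T_{K_\ell}$, so $T_D$ is block diagonal with blocks $T_{K_1},\ldots,T_{K_m}$, as claimed. The main obstacle I anticipate is making the contiguity claim airtight: it rests entirely on choosing the diagram so that each factor is traversed as one uninterrupted arc, and the bookkeeping in the cross-block case (that an excursion through a foreign factor meets a given strand an even number of times, namely zero or a full pair) must be stated with care. The same-block and diagonal computations are then routine once contiguity is in place, and the general $m$-factor statement follows either directly from the chain picture or by induction on the number of factors using the two-factor case.
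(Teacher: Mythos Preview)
Your proposal is correct and follows essentially the same approach as the paper: build an ``isolating'' diagram in which the factors are concatenated $1$--$1$ tangles, label crossings in blocks, and then argue cross-block entries vanish (excursions into a foreign factor hit each of its crossings an even number of times) while same-block entries reproduce the factor's own trip matrix. Your explicit formulation of the contiguity claim and your separate treatment of the diagonal entries make the argument slightly more structured than the paper's, but the underlying ideas are the same.
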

\begin{proof}
	Let $K= K_1\#K_2\#K_3...\#K_n$ where $K_j$ is a knot with $m_j$ crossings for each $j=1,2,...n$. Because $K$ is composite, it is possible to perform Reidemeister moves to obtain a knot diagram like the diagram in Figure 3, where the prime components $K_i$ are clearly and visibly independent from one another. Enumerate the crossings in $K$ such that the first $m_1$ crossings correspond to the $m_1$ crossings from $K_1$, crossings $m_1+1$ through $m_1+m_2$ correspond to the $m_2$ crossings from $K_2$, etc. 
    
    Suppose crossing $i$ is in component $K_j$. We first examine what happens to the entries in row $i$ that correspond to crossings in the component $K_m$ where $m\neq j$. Traveling in the direction of the over-crossing arrow at crossing $i$, there are two possibilities: either we leave the section of $K$ corresponding to $K_j$ before reaching crossing $i$ again, or we do not. If we do not leave the component, then the entries in row $i$ corresponding to the crossings in $K_m$ will all be zeros as we never pass through any crossings in the other components. If we do enter a section of $K$ corresponding to the component $K_m$, then we must necessarily pass through the entirety of $K_m$ before returning to crossing $i$ by construction of the connect sum. Hence, we pass through every crossing in $K_m$ twice, giving zeros in the trip matrix. Therefore, all entries in row $i$ that correspond to crossings not in the component $K_j$ are zeros, which forces a block diagonal structure. All that remains to check is that the blocks themselves are the trip matrices of the components. 

 To do so, we examine how crossing $i$ interacts with the other crossings in the $K_j$ component. If we stay in the same component, then we are following the path as if the component were not part of a connect sum. 
If we do enter a different component, the path will return in the exact same location relative to the other crossings as where it left, while traveling in the same direction. As a result the path relative to the other crossings in $K_j$ is the exact same as if it were an isolated knot, and so the entries corresponding to crossing $i$ from $K_j$ are identical to those in the trip matrix for $K_j$.\\

\end{proof}
In the proof of Theorem \ref{thm: composite implies block}, we were careful to label the crossings in a way that respected the components. We know that if we were to label the crossings differently, the matrix may no longer be block diagonal. However, Theorem \ref{thm: delta = relabel} guarantees that the matrix will be delta equivalent to a block diagonal matrix. Thus, combining Theorems \ref{thm: composite implies block} and \ref{thm: delta = relabel} we immediately obtain the following corollary. 

\begin{corollary} For every composite knot $K$, there is a matrix that is delta equivalent to a block diagonal matrix. 
\end{corollary}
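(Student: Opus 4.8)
The plan is to read the corollary exactly as the immediate consequence advertised in the paragraph preceding it: for a composite knot some trip matrix is already block diagonal, and every trip matrix of the same diagram is reachable from it by relabeling, hence by $\Delta$ operations. First I would invoke Theorem~\ref{thm: composite implies block} to fix, for the given composite knot $K$, a ``nice'' diagram in which the prime factors are isolated, together with the component-respecting labeling that produces a block diagonal trip matrix; call this matrix $B$. This $B$ is the target block diagonal matrix, and it already witnesses the conclusion for that one distinguished labeling, since every matrix is $\Delta$-equivalent to itself.

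Next I would handle an arbitrary labeling of the same diagram. Any two labelings of a fixed $n$-crossing diagram differ by a permutation $\sigma \in S_n$ of the crossing labels, and $\sigma$ factors as a product of transpositions, say $\sigma = (i_1\,j_1)\cdots(i_r\,j_r)$. By Theorem~\ref{thm: delta = relabel}, swapping the labels of crossings $i$ and $j$ is exactly the effect of the single operation $\Delta(i,j)$ on the trip matrix, so realizing the relabeling $\sigma$ corresponds to performing $\Delta(i_1,j_1), \ldots, \Delta(i_r,j_r)$ in succession on $B$. Consequently the trip matrix $T_K$ obtained under the permuted labeling is related to $B$ by a sequence of $\Delta$ operations, which is precisely the content of Definition~\ref{delta equivalent}.

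Combining these two steps yields the corollary: the block diagonal matrix $B$ supplied by Theorem~\ref{thm: composite implies block} is $\Delta$-equivalent to every trip matrix of the nice diagram, so in particular there exists a matrix that is $\Delta$-equivalent to a block diagonal matrix. I expect the only point requiring any care — and it is minor — to be the reduction from an arbitrary relabeling to a sequence of single transpositions: Theorem~\ref{thm: delta = relabel} is phrased for one swap, so one must note that transpositions generate $S_n$ and that $\Delta$-equivalence is closed under composition (which is built into Definition~\ref{delta equivalent}). No genuine obstacle arises here, since both facts are standard and the substantive work has already been carried out in the two cited theorems.
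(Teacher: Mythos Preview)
Your proposal is correct and matches the paper's approach exactly: the paper does not give a separate proof of this corollary but simply declares it an immediate consequence of Theorems~\ref{thm: composite implies block} and~\ref{thm: delta = relabel}, which is precisely the combination you spell out. Your added remark about factoring an arbitrary relabeling permutation into transpositions is a reasonable detail to include, since Theorem~\ref{thm: delta = relabel} is indeed phrased for a single swap, but the paper leaves this implicit.
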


\subsection{Properties Preserved In Trip Matrices of Connect Sums}

\noindent  From Theorem \ref{thm: composite implies block}, we know there is a trip matrix for a composite knot that is a block diagonal matrix with the blocks corresponding to trip matrices of the individual components. It would be natural to assume that many of the properties of the blocks are preserved in the large, composite matrix. In this subsection we show that all of the important information pertaining to the Jones Polynomial from Theorem \ref{Zulli} is preserved.

\begin{lemma}
	\label{thm: writhe preserved}
	Let $T_K$ be a block diagonal trip matrix for the knot $K$, and let $T_1, \dots ,T_n$ be the blocks in $T_K$. Then $w(T_K) = w(T_1) + \dots + w(T_n)$.
\end{lemma}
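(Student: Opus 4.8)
The plan is to observe that the quantity $w$ depends only on the main-diagonal entries of a matrix, and then to exploit the fact that the main diagonal of a block diagonal matrix is precisely the concatenation of the main diagonals of its blocks. Since $w$ is defined as the number of ones on the diagonal minus the number of zeros on the diagonal, additivity of $w$ will follow immediately once additivity of these two diagonal counts is established.

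First I would record the key structural fact. If $T_K$ is block diagonal with blocks $T_1,\dots,T_n$ of sizes $m_1,\dots,m_n$, then block $T_i$ occupies the rows and columns indexed by $m_1+\dots+m_{i-1}+1$ through $m_1+\dots+m_i$. Any diagonal entry $(T_K)_{kk}$ lies in exactly one of these index ranges and equals the corresponding diagonal entry of that block, while every off-diagonal block, being entirely zero, sits in positions $(k,\ell)$ with $k$ and $\ell$ in distinct ranges, hence $k\neq\ell$; such entries never meet the main diagonal. Consequently the multiset of diagonal entries of $T_K$ is the disjoint union of the multisets of diagonal entries of $T_1,\dots,T_n$.

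Next, let $o_i$ and $z_i$ denote the number of ones and zeros, respectively, on the diagonal of $T_i$. By the previous observation, the diagonal of $T_K$ contains exactly $o_1+\dots+o_n$ ones and $z_1+\dots+z_n$ zeros. Applying the definition of $w$ from Theorem \ref{Zulli} then gives
\[
w(T_K) = \sum_{i=1}^{n} o_i - \sum_{i=1}^{n} z_i = \sum_{i=1}^{n}(o_i - z_i) = \sum_{i=1}^{n} w(T_i),
\]
which is the claimed identity.

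Because the argument reduces to bookkeeping of diagonal entries, I do not anticipate any serious obstacle. The only point that genuinely requires care is verifying that the zero off-diagonal blocks contribute nothing to the main diagonal of $T_K$; this is immediate from the block indexing, but it is the one place where the block diagonal hypothesis is essential, and it is worth stating explicitly rather than leaving implicit.
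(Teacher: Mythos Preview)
Your proof is correct and follows essentially the same approach as the paper: both arguments introduce the counts of ones and zeros on the diagonal, use the block diagonal structure to see that these counts are additive across blocks, and then subtract. If anything, your version is slightly more explicit in justifying why the off-diagonal zero blocks never touch the main diagonal, whereas the paper simply asserts the additivity of the diagonal counts.
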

\begin{proof}
	Let $o(T_K)$ and $z(T_K)$ denote the number of ones and zeros along the diagonal of a given trip matrix $T$. By Theorem 3.2, $w(T_K) = o(T_K) - z(T_K)$. Also, 
    \[o(T_K) = o(T_1) + \dots + o(T_n)\]
    \[z(T_K) = z(T_1) + \dots + z(T_n)\]
    
   Thus,
    \[w(T_K) = o(T_1) + \dots + o(T_n) - (z(T_1) + \dots + z(T_n))\]
    \[= o(T_1) - z(T_1) + \dots o(T_n) - z(T_n) = w(T_1) + \dots + w(T_n)\]
\end{proof}

The following linear algebra result regarding block diagonal matrices is a simple application of the definition of the rank and nullity of a matrix. 

\begin{lemma}
	\label{thm: nullity preserved}
	Let $A$ be a block diagonal matrix with blocks $A_1$, $A_2$,...$A_m$. Then the following hold:
    
    \begin{itemize}
        \item $\text{rank}(A)=\text{rank}(A_1)+\text{rank}(A_2)+...+\text{rank}(A_m)$.
        \item $\text{nul}(A)=\text{nul}(A_1)+\text{nul}(A_2)+...+\text{nul}(A_m)$.
    \end{itemize}
\end{lemma}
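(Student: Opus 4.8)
The plan is to prove the two claims about block diagonal matrices directly from the definitions of rank and nullity, treating the rank statement first and deriving the nullity statement as an immediate consequence via the rank-nullity theorem. Since $A$ is block diagonal with blocks $A_1, \dots, A_m$, where each $A_\ell$ is a square matrix of size $m_\ell$, the key structural fact is that $A$ acts on vectors in a way that decouples across the blocks: if I write a vector $v$ in block form $v = (v_1, \dots, v_m)$ matching the block sizes, then $Av = (A_1 v_1, \dots, A_m v_m)$. This block-decoupling is the one observation from which everything else follows.

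First I would establish the rank statement. The column space of $A$ is spanned by the columns of $A$, and because of the block structure, the columns coming from block $A_\ell$ have nonzero entries only in the rows belonging to that block. Concretely, the column space of $A$ is the direct sum of the (embedded) column spaces of the individual blocks $A_\ell$, sitting in complementary coordinate subspaces. Since these subspaces meet only at zero, the dimension of the total column space is the sum of the dimensions of the pieces, giving $\operatorname{rank}(A) = \operatorname{rank}(A_1) + \dots + \operatorname{rank}(A_m)$.

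Next I would deduce the nullity statement. By the block-decoupling $Av = (A_1 v_1, \dots, A_m v_m)$, a vector $v$ lies in the null space of $A$ if and only if each component $v_\ell$ lies in the null space of the corresponding block $A_\ell$. Hence the null space of $A$ is exactly the direct sum of the (embedded) null spaces of the blocks, and so $\operatorname{nul}(A) = \operatorname{nul}(A_1) + \dots + \operatorname{nul}(A_m)$. Alternatively, once the rank statement is in hand, this follows purely by arithmetic: applying rank-nullity to $A$ and to each $A_\ell$, and using that the size of $A$ is $m_1 + \dots + m_m$, one subtracts the rank identity from the total-size identity to recover the nullity identity. I would likely present the direct null-space argument since it is the cleaner of the two and does not even require the rank statement as input.

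I do not expect any genuine obstacle here, as the lemma is, as the excerpt itself notes, a routine application of the definitions; the only point requiring a word of care is the claim that the embedded column spaces (respectively null spaces) of distinct blocks intersect trivially, which is immediate from the fact that they occupy disjoint sets of coordinates. The entire argument is valid over any field, and in particular over $\mathbb{Z}_2$, which is all that is needed for the trip matrix application.
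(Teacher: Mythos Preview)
Your proposal is correct and matches the paper's approach: the paper does not give a detailed proof but simply remarks that the lemma is ``a simple application of the definition of the rank and nullity of a matrix,'' which is exactly what your block-decoupling argument carries out. Your explicit observation that the embedded column spaces (and null spaces) of distinct blocks occupy disjoint coordinate subspaces and hence intersect trivially is precisely the routine verification the paper leaves to the reader.
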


We use the latter of these results to prove the following corollary, which applies this result specifically to the nullspace of toggled trip matrices:
\begin{corollary} Let $K$ be a knot with block diagonal trip matrix $T_K$. The nullity of a toggled matrix $T_{K_S}$ corresponding to state $S$ is the sum of the nullities of the toggled block matrices in $T_{K_S}$. 
\end{corollary}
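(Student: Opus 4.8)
The plan is to observe that the toggling operation preserves block diagonal structure, and then to invoke Lemma \ref{thm: nullity preserved} directly.

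First I would recall precisely how $T_{K_S}$ is formed: starting from $T_K$, we toggle exactly those diagonal entries corresponding to the crossings that state $S$ assigns a $B$. The essential point is that this operation modifies \emph{only} diagonal entries and leaves every off-diagonal entry of $T_K$ untouched.

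Next I would use the fact that in a block diagonal matrix every main-diagonal entry lies inside one of the diagonal blocks $T_1, \dots, T_m$, while the off-diagonal blocks are identically zero and contain no entries of the main diagonal. Consequently, toggling a diagonal entry of $T_K$ is the same as toggling a diagonal entry of exactly one block $T_j$. Letting $S_j$ denote the restriction of $S$ to the crossings of the $j$-th component, it follows that $T_{K_S}$ is itself block diagonal, with blocks $(T_1)_{S_1}, (T_2)_{S_2}, \dots, (T_m)_{S_m}$, where each $(T_j)_{S_j}$ is the toggled matrix of the $j$-th block under $S_j$.

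Finally I would apply the second conclusion of Lemma \ref{thm: nullity preserved} to the block diagonal matrix $T_{K_S}$, obtaining
\[ \text{nul}(T_{K_S}) = \text{nul}\big((T_1)_{S_1}\big) + \cdots + \text{nul}\big((T_m)_{S_m}\big), \]
which is exactly the asserted identity. I expect no real obstacle in this argument: the one step deserving explicit justification is the observation that toggling keeps all changes inside the blocks, and once that is established the corollary is an immediate consequence of the nullity formula for block diagonal matrices already proved in Lemma \ref{thm: nullity preserved}.
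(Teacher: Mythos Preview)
Your proposal is correct and follows essentially the same approach as the paper: observe that toggling only changes diagonal entries so $T_{K_S}$ remains block diagonal, then apply Lemma~\ref{thm: nullity preserved}. Your version is in fact slightly more explicit than the paper's, since you identify the resulting blocks as $(T_j)_{S_j}$ rather than merely calling them $T_1',\dots,T_m'$, but the underlying argument is identical.
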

\begin{proof} 
	We first observe that if $T_K$ is an $n\times n$ block diagonal matrix, then $T_{K_S}$ is also block diagonal, as $T_{K_S}$ is obtained from $T_K$ by only changing a subset of entries along the diagonal. Letting $T_1',T_2',...T_m'$ be the blocks in $T_{K_S}$, we find that by Theorem \ref{thm: nullity preserved}, we have

    \[\text{nul}(T_{K_S})=\text{nul}(T_1')+\text{nul}(T_2')+...+\text{nul}(T_m')\]
	\end{proof}

Finally, we examine how states are preserved in the block diagonal structure.
\begin{lemma}
	\label{thm: states preserved}
	Let $T_K$ be a block diagonal trip matrix for the composite knot $K=K_1\#K_2\#...\#K_n$, and let $T_1, \dots ,T_n$ be the respective trip matrix blocks corresponding to component knots $K_1, \dots ,K_n$. For a given state $S\in\mathcal{S}(K)$, $A(T_{K_S}) = A(T_{1_S}) + \dots + A(T_{n_S})$ and similarly, $B(T_{K_S}) = B(T_{1_S}) + \dots + B(T_{n_S})$. 
\end{lemma}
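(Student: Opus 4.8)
The plan is to reduce the statement to an elementary counting argument over the partition of crossings induced by the block-diagonal labeling. As in the proof of Theorem \ref{thm: composite implies block}, the crossings of $K$ are enumerated in consecutive disjoint groups $C_1, \dots, C_n$, where $C_j$ consists of the $m_j$ crossings of the component $K_j$ and the union $C_1 \cup \dots \cup C_n$ is the entire crossing set of $K$. Since a state is merely a function from the crossing set to $\{A,B\}$, any $S \in \mathcal{S}(K)$ restricts to a state $S_j := S|_{C_j} \in \mathcal{S}(K_j)$ on each component, and $S$ is recovered by juxtaposing the $S_j$.

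Next I would fix the notational translation: $A(T_{K_S})$ denotes $A(S)$, the number of crossings $c$ of $K$ with $S(c)=A$, while $A(T_{j_S})$ denotes the number of crossings of $K_j$ assigned $A$ by the restricted state $S_j$, and likewise for $B$. It is worth noting, as in the proof of the preceding corollary, that the diagonal toggles producing $T_{K_S}$ from $T_K$ lie entirely within the diagonal blocks, so the $j$-th block of $T_{K_S}$ is precisely the toggled trip matrix $T_{j_S}$ of $K_j$ under $S_j$; thus the block decomposition of $T_{K_S}$ is compatible with the decomposition of $S$ into the states $S_j$.

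With this in place the two equalities are immediate. The number of $A$'s assigned by $S$ is the cardinality of $\{c : S(c)=A\}$, and since the crossings partition as $C_1 \cup \dots \cup C_n$ with the $C_j$ pairwise disjoint, this cardinality splits as $\sum_{j=1}^{n} |\{c \in C_j : S(c) = A\}| = \sum_{j=1}^{n} A(T_{j_S})$, giving $A(T_{K_S}) = A(T_{1_S}) + \dots + A(T_{n_S})$. Replacing $A$ by $B$ throughout yields the second identity. I do not expect any genuine obstacle here: the entire content is the observation that counting $A$'s and $B$'s is additive over a disjoint union of crossing sets, and the only point requiring care is the bookkeeping that identifies the restricted states $S_j$ with the correct diagonal blocks of $T_{K_S}$ — which is furnished by the block structure of Theorem \ref{thm: composite implies block}.
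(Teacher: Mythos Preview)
Your proposal is correct and follows the same approach as the paper: both arguments observe that a state of $K$ decomposes into substates on the component knots according to the partition of crossings, so counting $A$'s and $B$'s is additive over this disjoint union. Your version simply spells out the bookkeeping (the restriction $S_j = S|_{C_j}$ and the identification of blocks of $T_{K_S}$ with the $T_{j_S}$) more explicitly than the paper's two-sentence proof.
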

\begin{proof}
	A state for the knot $K$ can be thought of as word in the alphabet $\left\{A, B\right\}$ and can be subdivided into smaller words corresponding to the component knots $K_1, \dots ,K_n$.  Naturally, the total number of $A$'s and $B$'s in the large word will be the sum of the number of $A$'s and $B$'s in the smaller words. 
\end{proof}

\subsection{The Jones Polynomial is Multiplicative Over Connect Sums}
\label{sec: the jones polynomial is multiplicative over connect sums}

With all the pieces in place, we can now provide an alternative proof, first proved in \cite{Jones2}, that the Jones polynomial is multiplicative of over connect sums.

\begin{theorem}\cite{Jones2}
	\label{thm: Big Boy}
	The Jones Polynomial is multiplicative over connect sums. That is, if $K = K_1 \# K_2 \# \cdots \# K_n$, \[V_K = \prod_{i = 1}^n V_{K_i}\].
\end{theorem}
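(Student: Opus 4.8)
The plan is to substitute the block-diagonal trip matrix guaranteed by \thmref{thm: composite implies block} into the state-sum formula of \thmref{Zulli} and show that every ingredient of that formula factors across the blocks, so that the single state sum collapses into a product of the state sums of the individual components. First I would fix a diagram of $K$ whose trip matrix $T_K$ is block diagonal with blocks $T_1,\dots,T_n$, where $T_i$ is a trip matrix for the prime component $K_i$; this is exactly \thmref{thm: composite implies block}. Since the Jones polynomial is a knot invariant, I may compute $V_K$ from this particular diagram. Writing $d = -t^{-1/2}-t^{1/2}$ for the loop value, the formula of \thmref{Zulli} reads
\[
V_K = (-t^{3/4})^{w(K)}\sum_{S\in\mathcal{S}(K)} (t^{-1/4})^{A(S)}(t^{1/4})^{B(S)}\, d^{\,nul(T_{K_S})}.
\]

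Next I would assemble the decompositions already established. The prefactor splits by \lemref{thm: writhe preserved}, giving $(-t^{3/4})^{w(K)} = \prod_{i=1}^n (-t^{3/4})^{w(T_i)}$. For the summand, the essential observation is that a state of $K$ is exactly a choice of state on each component: restricting $S$ to the crossings of each block yields a bijection $\mathcal{S}(K)\cong \mathcal{S}(K_1)\times\cdots\times\mathcal{S}(K_n)$, which I write $S\leftrightarrow(S_1,\dots,S_n)$. Under this identification, \lemref{thm: states preserved} gives $A(S)=\sum_i A(S_i)$ and $B(S)=\sum_i B(S_i)$, while the corollary to \lemref{thm: nullity preserved} gives $nul(T_{K_S})=\sum_i nul(T_{i_{S_i}})$ (toggling only diagonal entries preserves the block structure, so the $i$-th block of $T_{K_S}$ is precisely $T_{i_{S_i}}$). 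Hence each summand factors:
\[
(t^{-1/4})^{A(S)}(t^{1/4})^{B(S)}\, d^{\,nul(T_{K_S})} = \prod_{i=1}^n (t^{-1/4})^{A(S_i)}(t^{1/4})^{B(S_i)}\, d^{\,nul(T_{i_{S_i}})}.
\]

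Finally I would invoke the distributive law over a Cartesian product of index sets: a sum over $\mathcal{S}(K_1)\times\cdots\times\mathcal{S}(K_n)$ of a product whose $i$-th factor depends only on $S_i$ equals the product over $i$ of the individual sums. Combining this with the factored prefactor regroups everything as
\[
V_K = \prod_{i=1}^n\Bigl[(-t^{3/4})^{w(T_i)}\sum_{S_i\in\mathcal{S}(K_i)} (t^{-1/4})^{A(S_i)}(t^{1/4})^{B(S_i)}\, d^{\,nul(T_{i_{S_i}})}\Bigr],
\]
and each bracketed factor is just the formula of \thmref{Zulli} applied to the trip matrix $T_i$ of $K_i$, namely $V_{K_i}$ (here $w(T_i)=w(K_i)$ since $T_i$ is a trip matrix for $K_i$). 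This yields $V_K=\prod_{i=1}^n V_{K_i}$.

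I expect the main obstacle to be the middle step: making the state bijection $\mathcal{S}(K)\cong\prod_i\mathcal{S}(K_i)$ precise and confirming that the $i$-th diagonal block of the toggled matrix $T_{K_S}$ coincides with the toggled block $T_{i_{S_i}}$, so that the nullity and state-count additivity apply blockwise to the \emph{toggled} matrix and not merely to $T_K$. Once this is pinned down, the remainder is the purely formal interchange of a sum and a product together with direct substitution of Lemmas \ref{thm: writhe preserved}, \ref{thm: nullity preserved}, and \ref{thm: states preserved}.
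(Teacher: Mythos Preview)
Your proposal is correct and follows essentially the same route as the paper: fix the block-diagonal trip matrix from \thmref{thm: composite implies block}, factor the prefactor via \lemref{thm: writhe preserved}, decompose each state of $K$ into a tuple of component states, and use \lemref{thm: states preserved} and the corollary to \lemref{thm: nullity preserved} to factor every summand so that the state sum splits as a product. The only cosmetic difference is that you invoke the distributive law over the Cartesian product of index sets directly, whereas the paper first counts that both sides have $2^m=\prod_i 2^{m_i}$ terms and then matches them term-by-term; these are the same argument.
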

\begin{proof}
Suppose $K = K_1 \# K_2 \# \cdots \# K_n$. By Theorem \ref{thm: composite implies block}, there is a block diagonal trip matrix $T_K$ for $K$ where the blocks $T_1, \dots ,T_n$ are trip matrices for $K_1,K_2,...,K_n$, respectively. By Theorem \ref{Zulli}, the Jones Polynomials for the given knots are:  
	
	\[V_K = (-t^{\frac{3}{4}})^{w(K)}\sum_{S \in \mathcal{S}(K)} t^{-\frac{1}{4}^{A(S)}} t^{\frac{1}{4}^{B(S)}}(-t^{-\frac{1}{2}}-t^{\frac{1}{2}})^{nul(T_S)}\]
	
	\[V_{K_1} = (-t^{\frac{3}{4}})^{w(K_1)}\sum_{S \in \mathcal{S}(K_1)} t^{-\frac{1}{4}^{A(S)}} t^{\frac{1}{4}^{B(S)}}(-t^{-\frac{1}{2}}-t^{\frac{1}{2}})^{nul(T_{1_S})}\]
	
	\[\vdots\]
	
	\[V_{K_n} = (-t^{\frac{3}{4}})^{w(K_n)}\sum_{S \in \mathcal{S}(K_n)} t^{-\frac{1}{4}^{A(S)}} t^{\frac{1}{4}^{B(S)}}(-t^{-\frac{1}{2}}-t^{\frac{1}{2}})^{nul(T_{n_S})}\]
	
 Our claim is that $V_K = \prod_{i = 1}^n V_{K_i}$. 
	First, we verify the equality for terms outside of the summation. By Theorem \ref{thm: writhe preserved} we know that 
	
	\[w(K) = w(K_1) + \dots + w(K_n)\]
	
By exponent rules this means 
	
	\[(-t^{\frac{3}{4}})^{w(K)} = \prod_{i = 1}^n (-t^{\frac{3}{4}})^{w(K_i)}\]
	
	We move on to the pieces of the polynomial involving the summations. We first show that the product of the $n$ sums for the component knots has the same number of terms as the sum for $V_K$. Suppose $K$ has $m$ crossings so that $\mathcal{S}(K)$ has $2^m$ elements. Then there are $2^m$ terms in the unreduced sum for $V_K$. Suppose the components $K_1, K_2, \dots ,K_n$ have $m_1, m_2, \dots ,m_n$ crossings, respectively. Then each unreduced sum for $V_{{K_j}}$ will have $2^{m_j}$ terms. Thus, $\prod_{i = 1}^n V_{K_i}$ before reducing will have $2^{m_1} \times \dots \times 2^{m_n}$ terms in the sum. Since $m = m_1 + \dots + m_n$,  $2^m = 2^{m_1} \times \dots \times 2^{m_n}$; hence the number of terms in the unreduced sum for $V_K$ is the same as the number of terms in the unreduced sum for $\prod_{i = 1}^n V_{K_i}$.

 Because the number of terms on both sides of our alleged equality is the same, and because we can break up the states of the composite knot into smaller states in a very nice manner, our proof becomes very straightforward: we show that the polynomials are the same term-by-term. Consider a state $S \in \mathcal{S}(K)$. Then this state $S$ can be divided up into its component sub-states $S_1 \dots S_n$, where each sub-state $S_i$ is the section of the word $S$ that corresponds to the current state of the component knot $K_i$.\\
	
 Now consider the product of the terms from the sums for the component knots: 
	
	\[\prod_{i=1}^n t^{-\frac{1}{4}^{A(S_i)}} t^{\frac{1}{4}^{B(S_i)}}(-t^{-\frac{1}{2}}-t^{\frac{1}{2}})^{nul(T_{i_{S_i}})}\]
	
 We can break this product into three separate products for each like term in the expression: 
	
	\[\prod_{i=1}^n t^{-\frac{1}{4}^{A(S_i)}} \prod_{i=1}^n t^{\frac{1}{4}^{B(S_i)}} \prod_{i=1}^n (-t^{-\frac{1}{2}}-t^{\frac{1}{2}})^{nul(T_{i_{S_i}})}\]
	
 By Theorem \ref{thm: states preserved}, $A(S) = A(S_1) + \dots + A(S_n)$ and $B(S) = B(S_1) + \dots + B(S_n)$. Therefore by exponent rules, 
	
	\[t^{-\frac{1}{4}^{A(S)}} = \prod_{i=1}^n t^{-\frac{1}{4}^{A(S_i)}}\]
	
	\[t^{-\frac{1}{4}^{B(S)}} = \prod_{i=1}^n t^{-\frac{1}{4}^{B(S_i)}}\]
	
 By Theorem \ref{thm: nullity preserved}, $nul(T_S) = nul(T_{1_{S_1}}) + \dots + nul(T_{n_{S_n}})$. Again by exponent rules we get 
	
	\[(-t^{-\frac{1}{2}}-t^{\frac{1}{2}})^{nul(T_S)} = \prod_{i=1}^n (-t^{-\frac{1}{2}}-t^{\frac{1}{2}})^{nul(T_{i_{S_i}})}\]

 All together, this gives us the expression: 
	
	\[t^{-\frac{1}{4}^{A(S)}} t^{\frac{1}{4}^{B(S)}}(-t^{-\frac{1}{2}}-t^{\frac{1}{2}})^{nul(T_S)} = \prod_{i=1}^n t^{-\frac{1}{4}^{A(S_i)}} t^{\frac{1}{4}^{B(S_i)}}(-t^{-\frac{1}{2}}-t^{\frac{1}{2}})^{nul(T_{i_{S_i}})}.\]
	
	The right hand side is a term from the sum for the composite knot and the left hand side is the product of the terms corresponding to the same state. Therefore on a term-by-term basis, the large sum is the product of the smaller products as claimed.\\

	\noindent Therefore 
	
	\[V_K = \prod_{i = 1}^n V_{K_i}\]
	
	\noindent as claimed.
\end{proof}

%\section{Further Questions}
%\label{sec: further questions}
% Theorem 3.3 gives rise to an interesting question: can the trip matrix of a knot be used to detect if the knot is composite? In particular, we could ask if block matrices only arise from composite knots of if  every trip matrix of a composite knot is delta equivalent to a block matrix? The answer to the first question is no: introducing a twist to a single prime knot will create a block diagonal matrix. Thus, to fully answer these questions, we need to understand how the Reidemeister moves affect trip matrices, and vice versa: can trip matrices be used to detect ``extraneous" crossings which could be removed through some sequence of Reidemeister moves?

\bibliographystyle{amsalpha}
\bibliography{refs}

\end{document}